\numberwithin{equation}{section}
\newtheorem{theorem}{Theorem}
\begin{document}
\author{Ajai Choudhry}
\title{Expressing an integer as a sum of\\ cubes of  polynomials}
\date{}
\maketitle

\begin{abstract}
In this paper we prove that there exist infinitely many integers which can be expressed as a sum of four cubes of polynomials with integer coefficients. We give several identities that express the integers 1 and 2 as a sum of four cubes of polynomials. We also show that every integer can be expressed as a sum of five cubes of  polynomials with integer coefficients. 
\end{abstract}

\section{Introduction}\label{intro}
This paper is concerned with the problem of expressing an integer  as a sum of cubes of polynomials  with integer coefficients. As mentioned by Mordell \cite{Mo}, the first such result was obtained in 1908 by Werebrusow  who proved the identity,
\[ (1+6t^3)^3+(1-6t^3)^3+(-6t^2)^3=2. \]
Subsequently, in 1936, Mahler \cite{Ma} proved the identity,
\[ (9t^4)^3+(3t-9t^4)^3+(1-9t^3)^3 =1. \] 
If $a$ is any arbitrary integer,  we can multiply these identities by $a^3$ to express the integers $2a^3$ and $a^3$, respectively, as a sum of three cubes of  polynomials, but it is not known whether there is any other integer that can be expressed as a sum of three cubes of  polynomials with integer coefficients. Identities expressing an integer as a sum of four or five cubes of polynomials have apparently not been published.

We show in this paper that there exist infinitely many integers that are expressible as a sum of four cubes of polynomials with integer coefficients. Specifically, we give several identities expressing the integers 1 and 2 as a sum of four cubes of  polynomials. We also prove that every integer is expressible as a sum of five cubes of  polynomials with integer coefficients.

\section{Integers expressed as sums of four cubes of polynomials}

\subsection{Two polynomial identities}\label{identpq}
The theorem below gives  identities that describe infinitely many integers which can written as a sum of four cubes of polynomials.
\begin{theorem}
If $p$ and $q$ are arbitrary integers, any integer expressible either as $p^3+q^3$ or $2(p^6-q^6)$ is expressible as a sum of four cubes of univariate polynomials with integer coefficients.
\end{theorem}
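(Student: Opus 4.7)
The plan is to handle the two shapes separately, producing for each an explicit single-variable polynomial identity with integer coefficients that writes the given integer as a sum of four cubes. For the first shape I will leverage the Mahler identity recalled in the introduction; for the second I will need to design a genuinely new four-cube identity.

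For integers of the form $p^3+q^3$, I would multiply Mahler's identity $(9t^4)^3+(3t-9t^4)^3+(1-9t^3)^3=1$ through by $p^3$, obtaining $p^3$ as a sum of three cubes of polynomials in $\mathbb{Z}[t]$, and then adjoin the constant cube $q^3$ as the fourth summand. This produces
\[
p^3+q^3=(9pt^4)^3+(3pt-9pt^4)^3+(p-9pt^3)^3+q^3,
\]
which is a decomposition into four cubes of polynomials with integer coefficients.

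For integers of the form $2(p^6-q^6)$, the plan is to exploit the elementary identity $(x+y)^3+(x-y)^3=2x^3+6xy^2$ twice. I would write down the symmetric ansatz $(p^2+At)^3+(p^2-At)^3+(-q^2+Bt)^3+(-q^2-Bt)^3$, in which the pure cube contribution is automatically $2p^6-2q^6$, while the residual contribution $6p^2A^2t^2-6q^2B^2t^2$ vanishes precisely when $pA=\pm qB$. The symmetric choice $A=q$, $B=p$ meets this constraint over $\mathbb{Z}$, yielding
\[
2(p^6-q^6)=(p^2+qt)^3+(p^2-qt)^3+(pt-q^2)^3+(-pt-q^2)^3,
\]
which I would confirm by direct expansion.

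The main obstacle is spotting the correct ansatz for the second case: a more pedestrian route that applies the Werebrusow identity to $2(p^2)^3$ and $2(-q^2)^3$ separately produces six polynomial cubes rather than four, with no evident way to compress. Once the ansatz above is in hand the verification is routine, and letting $p,q$ range over $\mathbb{Z}$ then provides infinitely many distinct integers of each form, establishing the desired infinitude of integers that are sums of four cubes of polynomials.
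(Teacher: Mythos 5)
Your argument proves the statement as literally worded, but the two halves compare very differently with the paper. For $2(p^6-q^6)$, your symmetric ansatz $(p^2\pm At)^3+(-q^2\pm Bt)^3$, the vanishing condition $p^2A^2=q^2B^2$, and the choice $A=q$, $B=p$ reproduce exactly the paper's identity \eqref{identpq2}, derived in essentially the same way, so that half is the same proof. For $p^3+q^3$, however, the paper does something stronger than your shortcut: it constructs the identity \eqref{identpq1}, in which all four summands are non-constant quadratics in $t$, via the substitution $x_1=-y+p$, $x_2=-y+q$, $x_3=y+m$, $x_4=y-m$ followed by $m=(p+q)t+q$. Your route --- Mahler's identity scaled by $p^3$ with the constant cube $q^3$ adjoined --- is formally admissible, since a constant is a univariate polynomial with integer coefficients; but on that reading the first half of the theorem is already trivial ($p^3+q^3=p^3+q^3+0^3+0^3$), so your decomposition adds essentially nothing beyond the known three-cube identity for $p^3$. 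The substantive content of the paper's first case is a four-cube identity with no constant summands (which, e.g., yields the genuinely parametric representations of $7$ and $9$ given after the proof), and your first case does not deliver that. In short: second case, same argument; first case, correct for the letter of the statement but it sidesteps the nontrivial identity the paper actually establishes.
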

\begin{proof} The proof is based on the following two identities in both of which $t$ is an arbitrary parameter:
\begin{multline}
p^3+q^3 = \{2(p + q)t^2 + 4qt + q)\}^3+\{2(p + q)t^2 + 4qt - p + 2q\}^3\\
+\{-2(p + q)t^2 + (p - 3q)t + p\}^3+\{-2(p +q)t^2 - (p + 5q)t + p - 2q\}^3, \label{identpq1}
\end{multline}
and
\begin{equation}
2(p^6-q^6) =(pt-q^2)^3+(-pt-q^2)^3 +(qt+p^2)^3+(-qt+p^2)^3. \label{identpq2}
\end{equation}

 To obtain the identity \eqref{identpq1}, we  solve the equation
\begin{equation}
x_1^3+x_2^3+x_3^3+x_4^3=p^3+q^3, \label{eqnpq}
\end{equation}
by writing,
\begin{equation}
x_1=-y+p,x_2=-y+q, x_3=y+m,x_4=y-m,  \label{subspq}
\end{equation}
when \eqref{eqnpq} is readily solved for $y$  to yield a nonzero solution given by $y=-(2m^2 - p^2 - q^2)/(p + q)$.  On writing $m=(p+q)t+q$, we get $y=-2(p+q)t^2 - 4qt + p - q$, and on substituting the values of $m$ and $y$ in \eqref{subspq}, we get the values of $x_i, i=1, \ldots, 4$, and  we thus get the identity \eqref{identpq1}.

To prove the identity \eqref{identpq2}, we consider
\[(pt+u)^3+(-pt+u)^3+(qt+v)^3+(-qt+v)^3   \]
 as a cubic polynomial in $t$. It is evident that the coefficients of $t^3$ and $t$ in this polynomial vanish. We now choose $u=-q^2,v=p^2$, so that  the coefficient of $t^2$ also vanishes, and the polynomial reduces to $2(p^6-q^6)$, and we thus get the identity \eqref{identpq2}.

It immediately follows from the identities \eqref{identpq1} and \eqref{identpq2} that integers that may be written as $p^3+q^3$ or $2(p^6-q^6)$ can be written as a sum of four cubes of univariate polynomials with integer coefficients.
\end{proof}

As numerical applications of the identity \eqref{identpq1}, we give two examples by taking $(p,q)=(2, -1)$ and $(p,q)=(2, 1)$ when we obtain the following two identities expressing the integers 7 and 9, respectively, as a sum of four cubes of  polynomials with integer coefficients:
\begin{equation*}
\begin{aligned}
(2t^2 - 4t - 1)^3+(2t^2 - 4t - 4)^3+(-2t^2 + 5t + 2)^3+(-2t^2 + 3t + 4)^3 & =7, \\
(6t^2 + 4t + 1)^3 + (6t^2 + 4t)^3 + (-6t^2 - t + 2)^3 + (-6t^2 - 7t)^3  & =9.
\end{aligned}
\end{equation*}

Similarly, on taking $(p, q)=(2,1)$ in the identity \eqref{identpq2}, we get the following identity which expresses the integer 126 as a sum of four cubes of  polynomials: 
\begin{equation*}
(2t - 1)^3 + (-2t - 1)^3 + (t + 4)^3 + (-t + 4)^3 = 126.
\end{equation*}

\subsection{Expressing 1 as a sum of four cubes of  polynomials}\label{identone} 
We will now give several identities that express 1 as a sum of four cubes of polynomials.

To obtain the first identity, we will solve the equation
\begin{equation}
x_1^3+x_2^3+x_3^3+x_4^3=1, \label{eqone}
\end{equation}
by writing
\begin{equation}
x_1=py+1, x_2=-py+m, x_3=-py-m, x_4=py, \label{subsone}
\end{equation}
when Eq. \eqref{eqone} is readily solved for $y$ to obtain a nonzero solution given by $y=(2m^2 - 1)/p$, and on writing $m=pm_1+m_2$, we get 
\[
y=2pm_1^2 + 4m_1m_2 + (2m_2^2 - 1)/p.
\]
Now on taking $p=2m_2^2 - 1$, we get 
\[
y= 4m_1^2m_2^2 - 2m_1^2 + 4m_1m_2+1,
\]
and on substituting the values of $m, p$ and $y$ in \eqref{subsone}, we get the following solution of Eq. \eqref{eqone}:
\begin{equation}
\begin{aligned}
x_1 & = 2(2m_2^2 - 1)^2m_1^2 + 4m_2(2m_2^2 - 1)m_1 + 2m_2^2, \\
x_2 & = -2(2m_2^2 - 1)^2m_1^2 - (4m_2 - 1)(2m_2^2 - 1)m_1 - (2m_2 + 1)(m_2 - 1), \\
x_3 & = -2(2m_2^2 - 1)^2m_1^2 - (4m_2 + 1)(2m_2^2 - 1)m_1 - (m_2 + 1)(2m_2 - 1), \\
x_4  & = 2(2m_2^2 - 1)^2m_1^2 + 4m_2(2m_2^2 - 1)m_1 + 2m_2^2 - 1.
\end{aligned}
\end{equation}

Since the values of $x_i, i=1, \ldots, 4$, are  given by polynomials, with integer coefficents, in two variables, we have obtained an identity that expresses  the integer 1 as a sum of four cubes of polynomials in two variables.

We give below three  more identities expressing 1 as a sum of four cubes of   univariate polynomials:
\begin{equation*}
(2t^2)^3+(2t^2-1)^3+(-2t^2 - t + 1)^3+(-2t^2 + t + 1)^3=1, 
\end{equation*}
and 
\begin{multline*}
(8t^3 - 2t^2 - 4t + 1)^3 +  (8t^3 - 6t^2 - 3t + 2)^3\\
+(-8t^3 + 2t^2 + 3t)^3 + (-8t^3 + 6t^2 + 4t - 2)^3 =1, 
\end{multline*}
and
\begin{equation*}
(3t^6 + 3t^3 + 1)^3 + \{-3t^3(t^3 + 1)\}^3 + (-3t^4 - 2t)^3 +(-t)^3 = 1. 
\end{equation*}

These identities may be readily verified by direct computation.

\subsection{Expressing 2 as a sum of four cubes of polynomials}\label{identtwo}
The following identity  expresses the integer 2 as a sum of of four cubes of polynomials in three variables:
\begin{multline}
\{6t^3(g^3 + h^3)^2 + 1\}^3+ \{-6t^3(g^3 + h^3)^2 + 1\}^3\\
+\{-6gt^2(g^3 + h^3)\}^3+\{-6ht^2(g^3 + h^3)\}^3=2. \label{identint2first}
\end{multline}

To obtain the identity \eqref{identint2first} we begin by writing
\begin{equation}
(p+1)^3+(-p+1)^3+q^3+r^3=2 \label{eq2}
\end{equation}
which reduces to 
\begin{equation}
6p^2 + q^3+r^3=0. \label{eq2a}
\end{equation}

We solve Eq. \eqref{eq2a} by writing $p=fm,q=gm,r=hm$, when we get,
\begin{equation}
m=-6f^2/(g^3 + h^3).
\end{equation}
To obtain a solution with  integer coefficients, we write $f=t(g^3+h^3)$, when we get $ m=-6t^2(g^3 + h^3)$ which leads to the solution,
\begin{equation*}
p = -6t^3(g^3 + h^3)^2, \quad q = -6gt^2(g^3 + h^3), \quad r = -6ht^2(g^3 + h^3),
\end{equation*}
and on substituting these values in \eqref{eq2}, we get the identity \eqref{identint2first}.

We give below three more identities expressing the integer 2 as a sum of four cubes of univariate polynomials:
\begin{align*}
 (t^2)^3+(t^2)^3+(-t^2 + t + 1)^3+(-t^2 - t + 1)^3&=2,\\
(3t^3 + 1)^3 +(-3t^3 + 1)^3  +(-3t^2)^3 +(-3t^2)^3& =2,\\
(18t^3 + 1)^3+(-18t^3 + 1)^3  +(-6t^2)^3 + (-12t^2)^3&=2.
\end{align*}
These identities may be readily verified by direct computation. More such identities can be obtained.

\section{Integers expressed as sums of five cubes of polynomials}
We will now prove that every integer can be expressed as a sum of five cubes of polynomials.
\begin{theorem} Every integer is expressible as a sum of five cubes of univariate polynomials with integer coefficients.
\end{theorem}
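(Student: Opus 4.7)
The plan is to reduce the statement to a classical divisibility observation together with an elementary four-cube identity. Specifically, for every integer $N$ one has $6 \mid N^3 - N$, because $N^3 - N = (N-1)N(N+1)$ is the product of three consecutive integers. Hence $s := (N - N^3)/6$ is an integer, well-defined for every $N$. I would then invoke the polynomial identity
\begin{equation*}
6t = (t+1)^3 + (t-1)^3 + (-t)^3 + (-t)^3,
\end{equation*}
which is immediate on direct expansion (the coefficients of $t^3$, $t^2$, and $t$ all cancel), and specialize it at $t = s$ to obtain
\begin{equation*}
6s = (s+1)^3 + (s-1)^3 + (-s)^3 + (-s)^3.
\end{equation*}
Since $6s = N - N^3$ by construction, adding $N^3$ to both sides gives the desired five-cube representation
\begin{equation*}
N = N^3 + (s+1)^3 + (s-1)^3 + (-s)^3 + (-s)^3,
\end{equation*}
in which the five summands are cubes of the (constant) polynomials $N,\; s+1,\; s-1,\; -s,\; -s \in \mathbb{Z}[t]$.

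The only conceptual point is recognizing that constant polynomials are admissible under the wording of the theorem; once this is accepted, no serious obstacle remains. The argument consists of a one-line divisibility check plus a four-cube identity whose verification is a routine expansion. In particular, unlike the four-cube problem treated earlier in the paper, there is no need to appeal to Theorem~1 or to confront any unresolved Diophantine question (such as representability of a general integer as a sum of three integer cubes): the structure $N = N^3 + 6s$ and the universality of the identity for $6t$ together handle every $N$ at once.
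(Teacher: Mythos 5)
Your arithmetic is correct as far as it goes, but note what it actually establishes: all five of your cubes are constants, so you have only reproved the classical elementary fact that every integer is a sum of five cubes of \emph{integers} (indeed, your argument is exactly the standard one: $N\equiv N^3 \pmod 6$ plus the identity $6s=(s+1)^3+(s-1)^3-2s^3$). The theorem, read in the context of this paper, is about genuine polynomial identities: the whole point, going back to the Werebrusow and Mahler identities quoted in the introduction and to the claim that identities expressing an integer as a sum of four or five cubes of polynomials have not previously been published, is to represent each integer by \emph{nonconstant} polynomials in a parameter $t$, so that one obtains infinitely many essentially different representations at once. Under the reading in which constant polynomials are admitted, the statement is old and trivial and would not merit a theorem; your own remark that the ``only conceptual point'' is admitting constants is exactly where the proposal falls short of what is intended and proved here.

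The good news is that your two ingredients are precisely the paper's, and the missing step is small. The paper starts from the same identity $6r=(r+1)^3+(r-1)^3-2r^3$, but instead of absorbing the residue of $N$ modulo $6$ into the constant cube $N^3$, it writes $N=6m+j$ with $j\in\{0,1,\dots,5\}$ and adds to both sides the cube of the nonconstant polynomial $-6t+j$ (using $-6t-1$ and $-6t-2$ in place of $-6t+5$ and $-6t+4$ to keep the coefficients simple). Because $j^3\equiv j \pmod 6$, the equation $6r+(-6t+j)^3=6m+j$ can be solved for $r$ as a polynomial in $m$ and $t$ with integer coefficients, and substituting that $r$ back into the identity gives, for each residue class, an identity in which four of the five cubes are nonconstant cubic polynomials in $t$ (these are the identities \eqref{5cubes0}--\eqref{5cubes5}). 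Your argument is the degenerate specialization of this in which the parameter is killed (equivalently, the correcting cube is taken to be $N^3$ itself); to prove the theorem in the sense the paper intends, you must keep the parameter $t$ alive exactly as above.
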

\begin{proof}
The proof is based on the following six identities:
\begin{equation}
6m   =(36t^3 + m + 1)^3 + (36t^3 + m - 1)^3 + 2(-36t^3 - m)^3 + (-6t)^3,\label{5cubes0}
\end{equation}
\begin{multline}
6m+1 = (36t^3 - 18t^2 + 3t +m + 1)^3 + (36t^3 - 18t^2  + 3t +m - 1)^3 \\
       + 2(-36t^3 + 18t^2 - 3t - m)^3 + (-6t + 1)^3, \label{5cubes1}
\end{multline}
\begin{multline}
6m+2   =(36t^3 - 36t^2 + 12t+m )^3 + (36t^3 - 36t^2  + 12t+m  - 2)^3 \\
  + 2(-36t^3 + 36t^2 - 12t - m + 1)^3 + (-6t + 2)^3,\label{5cubes2}
	\end{multline}
\begin{multline}
6m+3  =(36t^3 - 54t^2  + 27t+m - 3)^3 + (36t^3 - 54t^2  + 27t+m  - 5)^3 \\
 + 2(-36t^3 + 54t^2 - 27t - m + 4)^3 + (-6t + 3)^3,\label{5cubes3}
\end{multline}
\begin{multline}
6m+4  =(36t^3 + 36t^2  + 12t +m + 3)^3 + (36t^3 + 36t^2  + 12t +m + 1)^3 \\
  + 2(-36t^3 - 36t^2 - 12t - m - 2)^3 + (-6t - 2)^3,\label{5cubes4}
	\end{multline}
\begin{multline}
6m+5  =(36t^3 + 18t^2  + 3t+m  + 2)^3 + (36t^3 + 18t^2  + 3t+m )^3 \\
 + 2(-36t^3 - 18t^2 - 3t - m - 1)^3 + (-6t - 1)^3,\label{5cubes5}
\end{multline}
where $m$ and $t$ are arbitrary parameters.

To obtain the above identities, we begin with the following simple, readily verifiable, identity:
\begin{equation}
6r= (r + 1)^3 + (r - 1)^3 - 2r^3. \label{identsimple1}
\end{equation}
On adding $(-6t+j)^3$ to both sides, we get the identity
\begin{equation}
6r+(-6t+j)^3= (r + 1)^3 + (r - 1)^3 - 2r^3+ (-6t+j)^3.\label{identsimple2}
\end{equation}

We now equate the left-hand side of the identity \eqref{identsimple2} to $6m+j$, where $j$ is an integer, and solve for $r$. Since $j^3 \equiv j \mod 6$, we get a value of $r$, say $r=r_0$, which is given by a polynomial, with integer coefficients, in the parameters $m$ and $t$. On replacing $r$ by $r_0$ in the identity \eqref{identsimple2}, we get an identity that expresses $6m+j$ as a sum of five cubes of polynomials with integer coefficients. By successively taking $j=0, 1, 2, 3$, we   obtained the identities \eqref{5cubes0}--\eqref{5cubes3}. 

While we can obtain identities expressing $6m+4$ and $6m+5$ as a sum of five cubes of polynomials in the same way, we obtained the simpler identity \eqref{5cubes4} by adding $(-6t-2)^3$ (instead of $(-6t+4)^3$) to both sides of \eqref{identsimple1} and proceeding as above, while for the  identity \eqref{5cubes5}, we added $(-6t-1)^3$ (instead of $(-6t+5)^3$) to both sides of \eqref{identsimple1} and followed the same procedure. The identities \eqref{5cubes0}--\eqref{5cubes5} can  also be readily verified by direct computation.

Since any arbitrary integer $n$ is expressible as $6m+j$ where $ j \in \{0,1,\ldots$, $5\}$, it follows from the identities \eqref{5cubes0}--\eqref{5cubes5} that every integer is expressible as the sum of five cubes of univariate polynomials with integer coefficients.
\end{proof}

As numerical examples, taking $m=0$ in the identities \eqref{5cubes3} and \eqref{5cubes4}, we get the following two identities expressing the integers 3 and 4, respectively, as a sum of five cubes of  polynomials:
\begin{align*}
(36t^3 - 54t^2  + 27t - 3)^3 + (36t^3 - 54t^2  + 27t - 5)^3& \\
 + 2(-36t^3 + 54t^2 - 27t + 4)^3 + (-6t + 3)^3  & =3, \\
(36t^3 + 36t^2  + 12t + 3)^3 + (36t^3 + 36t^2 + 12t + 1)^3 & \\
  + 2(-36t^3 - 36t^2 - 12t  - 2)^3 + (-6t - 2)^3& = 4.
\end{align*}

\section{Some open problems}
It would be interesting to find identities expressing  small integers such as 3 and 4 as a sum of four cubes of  polynomials. In fact, it would be useful to determine which integers can be expressed as a sum of four cubes of  polynomials and for which integers it becomes necessary to use five cubes of polynomials. 

It is a far more  challenging  problem to find new  identities  expressing an integer  as a sum of three cubes of  polynomials with integer coefficients.    In fact, it seems unlikely that, apart from  integers of the type $a^3$ and $2a^3$ mentioned in the Introduction,  there are any other integers that can  be  expressed as a sum of three cubes of  polynomials.

\noindent Postal address: Ajai Choudhry, 13/4 A, Clay Square, Lucknow - 226001, India\\

\noindent E-mail: ajaic203@yahoo.com


\begin{thebibliography}{9} 
\bibitem{Ma} K. Mahler, Note on hypothesis $K$ of Hardy and Littlewood, {\it J. Lond. Math. Soc.} {\bf 11} (1936), 136--138. 
\bibitem{Mo} L. J. Mordell, On sums of three cubes, {\it J. Lond. Math. Soc.} {\bf 17}, (1942) 139--144 .
\end{thebibliography}
\end{document}